\newcommand{\la}{\lambda}
\newcommand{\C}{\mathbb{C}}
\newcommand{\N}{\mathbb{N}}
\DeclareMathOperator{\spn}{span}
\newcommand{\lrangle}[1]{\langle{#1}\rangle}
\newcommand{\lrbrack}[1]{\left[{#1}\right]}
\theoremstyle{plain}
\newtheorem{theorem}{Theorem}[section]
\newtheorem{corollary}[theorem]{Corollary}
\newtheorem{proposition}[theorem]{Proposition}
\theoremstyle{definition}
\newtheorem{definition}[theorem]{Definition}
\newtheorem{example}[theorem]{Example}
\title{Minimal nonnilpotent Leibniz algebras}
\author[Bosko-Dunbar]{Lindsey Bosko-Dunbar}
\address{Department of Mathematics, Spring Hill College\\
Mobile, AL 36608}
\email{lboskodunbar@shc.edu}
\author[Dunbar]{Jonathan D. Dunbar}
\address{Department of Mathematics, Spring Hill College\\
Mobile, AL 36608}
\email{jdunbar@shc.edu}
\author[Hird]{J.T. Hird}
\address{Department of Mathematics, West Virginia University, Institute of Technology\\
Beckley, WV 25832}
\email{John.Hird@mail.wvu.edu}
\author[Stagg]{Kristen Stagg Rovira}
\address{Department of Mathematics, California State University, Dominguez Hills\\
Carson, CA 90747}
\email{Kstagg@csudh.edu}
\begin{document}

\subjclass[2010]{17D99}
\keywords{Leibniz, maximal, solvable, Lie}

\begin{abstract}
We classify all nonnilpotent, solvable Leibniz algebras with the property that all proper subalgebras are nilpotent. This generalizes the work of \cite{stitz_min} and \cite{proper_nilp} in Lie algebras. We show several examples which illustrate the differences between the Lie and Leibniz results.
\end{abstract}

\doublespacing
\maketitle

\section{Introduction}

Leibniz algebras were defined by Loday in \cite{loday}. They are a generalization of Lie algebras, removing the restriction that the product must be anti-commutative or that the squares of elements must be zero. One immediate consequence of this is that while the Lie algebra generated by a single element is necessarily one-dimensional, the Leibniz algebra generated by a single element (called a cyclic algebra) could be of any dimension.

Recent work in Leibniz algebra often involves studying certain classes of Leibniz algebras, such as cyclic algebras \cite{bugg-hedges}, algebras with a certain nilradical \cite{Heis,tri-nilrad}, or algebras of a certain dimension \cite{demir-4,demir-5,khudo-5}. Many of these articles involve generalizing results from Lie algebras to Leibniz algebras. Some of these results only hold over the field of complex numbers.

An algebra $L$ is called \emph{minimal nonnilpotent} if $L$ is nonnilpotent, solvable, and all proper subalgebras of $L$ are nilpotent. Minimal nonnilpotent Lie algebras were studied by Stitzinger in \cite{stitz_min}. Later Towers classified all such Lie algebras in \cite{proper_nilp}. It is the goal of this work to generalize these results to Leibniz algebras. Our results hold over any field.

\section{Results}

A Leibniz algebra $L$ is a vector space equipped with a bilinear product or bracket $ab=[a,b]$ which satisfies the Leibniz identity
$a(bc)=(ab)c+b(ac)$
for all $a,b,c \in L$.  For convenience we suppress the bracket notation for the product of individual elements 
of the algebra.  Note that we follow the notation in \cite{barnesleib,demir} and use ``left'' Leibniz algebras; some authors \cite{tri-nilrad,khudo-5} instead use ``right'' Leibniz algebras.

The following result was proven in \cite{demir} Theorem 4.16.


\begin{proposition}\label{normalizer} A Leibniz algebra $L$ is nilpotent if and only if every proper subalgebra of $L$ is properly contained in its normalizer.
\end{proposition}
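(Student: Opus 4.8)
The plan is to prove the two implications separately. Throughout, for a subalgebra $H$ of $L$ I write $N_L(H)=\{x\in L: xH\subseteq H \text{ and } Hx\subseteq H\}$ for the normalizer, the largest subalgebra of $L$ in which $H$ is an ideal. For the forward direction I would use the descending central series: let $L=L^1\supseteq L^2\supseteq\cdots$ with $L^{i+1}=LL^i+L^iL$, so that $L$ nilpotent means $L^m=0$ for some $m$, and by construction both $LL^i\subseteq L^{i+1}$ and $L^iL\subseteq L^{i+1}$. Given a proper subalgebra $H$, I would pick the largest index $k$ with $L^k\not\subseteq H$, which exists since $L^1=L\not\subseteq H$ while $L^m=0\subseteq H$; then $L^{k+1}\subseteq H$. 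For every $z\in L^k$ and $h\in H\subseteq L$ we get $zh\in L^kL\subseteq L^{k+1}\subseteq H$ and $hz\in LL^k\subseteq L^{k+1}\subseteq H$, so $L^k\subseteq N_L(H)$. As $L^k\not\subseteq H$, this yields $H\subsetneq N_L(H)$.

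For the converse I would first reduce the normalizer condition to a statement about maximal subalgebras. If $M$ is a maximal subalgebra, the hypothesis gives $M\subsetneq N_L(M)$; since $N_L(M)$ is a subalgebra properly containing the maximal subalgebra $M$, maximality forces $N_L(M)=L$, which is exactly the statement that $M$ is an ideal of $L$. Thus it suffices to prove the implication that \emph{if every maximal subalgebra of $L$ is an ideal, then $L$ is nilpotent}.

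I would prove this by induction on $\dim L$ (assuming $L$ finite dimensional, as is implicit throughout), the case $\dim L\le 1$ being immediate because the Leibniz identity forces the unique one-dimensional algebra to be abelian. The hypothesis is inherited by every proper quotient $L/I$, since the maximal subalgebras of $L/I$ are exactly $M/I$ for $M\supseteq I$ maximal in $L$, and such $M$ are ideals; hence $L/I$ is nilpotent for each nonzero ideal $I$. Now fix a minimal ideal $I$ and let $\Phi(L)$ denote the intersection of the maximal subalgebras. If $I\not\subseteq\Phi(L)$, choose a maximal subalgebra $M$ with $I\not\subseteq M$; then $M+I=L$, while $M\cap I$ is an ideal strictly smaller than $I$, so $M\cap I=0$ and $L=M\oplus I$ as a sum of ideals, with $M\cong L/I$ nilpotent and $M\neq 0$. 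The key observation is that any maximal subalgebra $C$ of $I$ produces a maximal subalgebra $M\oplus C$ of $L$, which is therefore an ideal, so $C=(M\oplus C)\cap I$ is an ideal of $I$; by induction $I$ is nilpotent, and hence so is $L=M\oplus I$.

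The remaining case $I\subseteq\Phi(L)$ is where the real work lies. Here I would invoke the Leibniz analogue of the Frattini theorem—that $L/\Phi(L)$ nilpotent implies $L$ nilpotent—and combine it with the observation that $L/\Phi(L)$, being a quotient of the nilpotent algebra $L/I$, is itself nilpotent; this closes the induction. I expect the main obstacle to be establishing this Frattini-type result for Leibniz algebras, and it is here, rather than in the elementary steps above, that any field hypotheses would have to be verified. I would anticipate that the proof in \cite{demir} proceeds through exactly this Frattini machinery.
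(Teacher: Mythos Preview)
The paper supplies no proof of its own; it simply cites \cite{demir}, Theorem~4.16. So there is no in-paper argument to compare your proposal against.

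Your forward direction is correct. One remark: you use a two-sided lower central series $L^{i+1}=LL^i+L^iL$, whereas for left Leibniz algebras the usual convention is one-sided. The identity $(ab)c=a(bc)-b(ac)$ yields $[L^i,L]\subseteq[L,L^i]$ by a short induction, so the two series coincide and nothing is lost; it is worth a sentence to say so.

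For the converse, the reduction to ``every maximal subalgebra is an ideal $\Rightarrow$ $L$ nilpotent'' and your treatment of Case~1 are fine. The gap you flag in Case~2 is genuine and is exactly where the content lies: you need the Leibniz version of the Frattini nilpotency theorem, that $L/\Phi(L)$ nilpotent forces $L$ nilpotent (and under your hypothesis $\Phi(L)$ is an intersection of ideals, hence an ideal, so the quotient makes sense). That result holds over any field and is part of the Frattini theory developed in \cite{demir}, so your expectation about how the cited proof proceeds is accurate, and your outline can be completed by quoting it. As it stands, however, your argument and the paper's treatment are on equal footing: both defer the substantive step to \cite{demir}.
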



\begin{definition}
Let $M$ be a subalgebra of a Leibniz algebra $L$. Define the \emph{core} of $M$ to be the maximal ideal of $L$ contained in $M$.
\end{definition}



\begin{proposition}\label{stitz_prop}
Let $L$ be a solvable Leibniz algebra and let $M$ be a self-normalizing maximal subalgebra of $L$. Let $N$ be the core of $M$. Then
\begin{enumerate}
\item $L/N$ contains a unique minimal ideal $A/N$.
\item $L/N$ is the semidirect sum of $A/N$ and $M/N$.
\item The Frattini ideal, $\phi(L/N)=0$.
\item $L/N$ is not nilpotent.
\end{enumerate}
\end{proposition}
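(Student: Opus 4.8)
The plan is to first reduce to the case $N=0$. Replacing $L$ by $L/N$ and $M$ by $M/N$, I would check that $M/N$ remains a self-normalizing maximal subalgebra of $L/N$ whose core is trivial: maximality passes to the quotient by the correspondence theorem, the normalizer commutes with the quotient map since $N\subseteq M$ is an ideal (so $N_{L/N}(M/N)=N_L(M)/N=M/N$), and the core of $M/N$ corresponds to the largest ideal of $L$ lying between $N$ and $M$, namely $N$ itself, hence is zero. Thus it suffices to prove the four statements for an algebra $L$ with a self-normalizing maximal subalgebra $M$ of trivial core, reinterpreting $A/N$ as $A$.

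For (1) and (2), I would start from any minimal ideal $A$ of $L$. Since $M$ has trivial core, $A\not\subseteq M$, so maximality of $M$ forces $A+M=L$. The key Leibniz computation is that $A$ is abelian: using the left Leibniz identity one checks that $A^2$ is a two-sided ideal of $L$ contained in $A$ (for $a,b\in A$ and $x\in L$ one has $x(ab)=(xa)b+a(xb)\in A^2$ and $(ab)x=a(bx)-b(ax)\in A^2$), and solvability forces $A^2\neq A$, so minimality gives $A^2=0$. With $A$ abelian, the same bracket bookkeeping shows $A\cap M$ is a two-sided ideal of $L$ contained in $M$, hence lies in the core and is zero; therefore $L=A\oplus M$ is a semidirect sum, giving (2). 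Uniqueness in (1) follows by a dimension count: a second minimal ideal $B\neq A$ would satisfy $[A,B],[B,A]\subseteq A\cap B=0$, so $A+B$ is an abelian ideal meeting $M$ trivially, and $\dim(A+B)+\dim M\le\dim L=\dim A+\dim M$ contradicts $\dim(A+B)=\dim A+\dim B>\dim A$.

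Statement (3) is then immediate: the Frattini ideal $\phi(L)$ is an ideal contained in the Frattini subalgebra, which lies inside the maximal subalgebra $M$; being an ideal inside $M$ it is contained in the core, so $\phi(L)=0$. For (4), I would argue by contradiction using Proposition \ref{normalizer}: if $L$ were nilpotent then every proper subalgebra, in particular the maximal subalgebra $M$, would be properly contained in its normalizer, contradicting the fact that $M$ is self-normalizing.

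The main obstacle I anticipate is not the overall strategy, which mirrors the Lie-algebraic argument, but the Leibniz-specific verifications: confirming via the (non-antisymmetric) Leibniz identity that $A^2$ and $A\cap M$ are genuinely two-sided ideals, and that minimal ideals of a solvable Leibniz algebra are abelian. I would take care that all bracket conditions are checked on both sides, since left and right multiplications behave differently here, and that the reduction to $N=0$ preserves the self-normalizing and trivial-core hypotheses.
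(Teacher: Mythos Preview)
Your proposal is correct and follows exactly the approach the paper intends: the paper gives no detailed argument, stating only that ``the proof is identical to the Lie case in \cite{stitz_min} and makes use of Proposition~\ref{normalizer},'' and your write-up is precisely that Lie-theoretic argument carried out with the necessary Leibniz-specific two-sided ideal checks. The one step you compress---that $(A+B)\cap M=0$ in the uniqueness argument---follows by the same reasoning you used for $A\cap M$ (since $A+B$ is abelian, $(A+B)\cap M$ is again an ideal of $L$ inside $M$, hence in the core), so there is no genuine gap.
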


The proof is identical to the Lie case in \cite{stitz_min} and makes use of Proposition \ref{normalizer}.

\begin{theorem}\label{min_nonnilp}
Let $L$ be a nonnilpotent, solvable Leibniz algebra all of whose proper subalgebras are nilpotent. Then $L=A \oplus \spn\{x\}$ and $A=nilrad(L)=\spn\{a_0,\ldots,a_k\} \oplus N$, with $N$ an ideal of $L$ and $x \in L$ is described by the following products:
$$xa_0=a_1, \quad xa_1=a_2, \quad \ldots, \quad xa_{k-1}=a_k, \quad xa_k=c_0a_0 + \cdots + c_ka_k$$
where $c_0 \neq 0$. Additionally $N=\langle x \rangle^2 + (\spn\{a_0,\ldots,a_k\})^2$, $A^3\leq Leib(L)$, and $p(\la)=\la^{k+1}-c_k\la^k-\cdots-c_1\la-c_0$ is irreducible. Finally, either $L$ is cyclic or $Leib(L)\leq N$.
\end{theorem}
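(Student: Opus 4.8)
The plan is to establish the coarse decomposition first and then analyze the action of a single element. Since $L$ is solvable we have $L^2 \neq L$, so $L^2$ is a proper ideal, hence nilpotent by hypothesis, whence $L^2 \le nilrad(L)=:A$ and $L/A$ is abelian. If $\dm(L/A)\ge 2$, pick a one-dimensional subspace of $L/A$; as $L/A$ is abelian this is an ideal, and its preimage $B$ satisfies $A\subsetneq B\subsetneq L$ with $B\trianglelefteq L$. Then $B$ is a proper subalgebra, hence nilpotent, so $B$ is a nilpotent ideal properly containing $nilrad(L)$, a contradiction. Thus $\dm(L/A)=1$ and $L=A\oplus\spn\{x\}$. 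Because $A$ is nilpotent, every $\ell_a$ with $a\in A$ is nilpotent on $L$, so by Engel's theorem for Leibniz algebras the nonnilpotency of $L$ lets me choose $x$ with $\ell_x$ nonnilpotent; since $xL\subseteq A$ (as $xA\subseteq A$ and $x^2\in L^2\le A$), the restriction $\ell_x|_A$ is a nonnilpotent derivation of $A$.

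Next I would invoke Proposition \ref{stitz_prop}. A self-normalizing maximal subalgebra exists: by Proposition \ref{normalizer} there is a self-normalizing proper subalgebra $H$; embedding $H$ in a maximal subalgebra $M$, if $M$ is not self-normalizing then $M\trianglelefteq L$ is nilpotent, and $N_M(H)=N_L(H)\cap M=H$ would make $H$ self-normalizing in the nilpotent algebra $M$, contradicting Proposition \ref{normalizer} for $M$ unless $H=M$; either way $M$ is self-normalizing. Let $N$ be the core of $M$; as $M$ is nilpotent, $N$ is a nilpotent ideal, so $N\le A$. Proposition \ref{stitz_prop} then gives that $A/N$ (which I would identify with $nilrad(L)/N$) is the unique minimal ideal of $L/N$ and $L/N=(A/N)\dotplus(M/N)$; a dimension count $\dm(L/N)=\dm(A/N)+1$ forces $\dm(M/N)=1$, so $M/N=\spn\{\bar x\}$ and I may take $x$ as above. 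Reading the action off the irreducible $\langle\ell_x\rangle$-module $A/N$ produces classes with $x\bar a_i=\bar a_{i+1}$ for $i<k$ and $x\bar a_k=\sum c_i\bar a_i$, where irreducibility of the module is equivalent to irreducibility of $p(\la)$ and $c_0\neq 0$ holds because otherwise $\ker(\ell_x)$ would descend to a proper nonzero submodule. Lifting $\bar a_0$ to $a_0$ and setting $a_i:=\ell_x^{\,i}a_0$ gives $U:=\spn\{a_0,\dots,a_k\}$ and $A=U\oplus N$.

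For the remaining assertions, first note $A/N$ is abelian: $(A/N)^2$ is an ideal of $L/N$ strictly inside the minimal ideal $A/N$, hence zero, so $A^2\le N$ and in particular $U^2\le N$; combined with the square terms this yields $N=\langle x\rangle^2+U^2$. Passing to the Lie quotient $\bar L=L/Leib(L)$, whose proper subalgebras are again nilpotent, the classification of \cite{stitz_min,proper_nilp} applies when $\bar L$ is nonnilpotent and forces its nilradical to have class at most two, so $\bar A^3=0$, i.e. $A^3\le Leib(L)$. The genuinely Leibniz point is the placement of the squares $z^2$ in the splitting $A=U\oplus N$: the dichotomy says either every such square lies in $N$ (so $Leib(L)\le N$), or some square has a nonzero $U$-component, in which case invertibility of $\ell_x|_U$ (from $c_0\neq 0$) lets me apply powers of $\ell_x$ to that component to recover $a_0$ and hence $x$, so a single element generates $L$ and $L$ is cyclic.

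The principal obstacle is the middle identification together with the lift: proving that the core $N$ coincides with $\langle x\rangle^2+U^2$, that $nilrad(L)/N$ is exactly the minimal ideal of Proposition \ref{stitz_prop}, and that the cyclic basis can be chosen so that $xa_k=\sum c_ia_i$ holds on the nose in $A$ rather than merely modulo $N$. The last point is a splitting of an $\langle\ell_x\rangle$-module extension $0\to N\to A\to A/N\to 0$ and relies on $p(\la)$ being coprime to the action of $\ell_x$ on $N$; verifying this coprimality, and more generally tracking where the squares $z^2$ land in $U\oplus N$, is precisely the bookkeeping absent from the Lie arguments of \cite{stitz_min,proper_nilp}, where all squares vanish, and is what makes the Leibniz case genuinely different.
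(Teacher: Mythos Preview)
Your outline shares the paper's skeleton---self-normalizing maximal $M$, core $N$, Proposition~\ref{stitz_prop}, and the action of a single $x$ on the unique minimal ideal---but handles the three technical points you flag as obstacles differently. For $\dim(M/N)=1$ the paper does not go through your codimension-one nilradical argument and the (as written, circular) identification of $\nr(L)/N$ with the minimal ideal; instead it proves directly that $N_1:=\spn\{x^2,\dots,x^n\}+N$ is an ideal of $L$ contained in $M$, whence $N_1=N$ by maximality of the core and $x^2\in N$. For the exact relation $xa_k=\sum c_ia_i$ (not merely modulo $N$) the paper uses the Fitting decomposition $L=M\oplus L_1$ with respect to left multiplication by $M$: choosing $a_0\in L_1$ keeps every $a_i=\ell_x^{\,i}(a_0)$ inside the $\ell_x$-invariant subspace $L_1$, which is precisely your coprime module splitting made concrete. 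The Fitting setup also delivers $[N,L_1]=0$ and, by a dimension count, the \emph{equality} $N=(L_1)^2+\langle x\rangle^2$, where your argument only gives the inclusion $\langle x\rangle^2+U^2\le N$.

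There is, however, one gap you have not flagged: your proof that $A^3\le Leib(L)$ by passing to the Lie quotient $\bar L=L/Leib(L)$ and quoting \cite{stitz_min,proper_nilp} works only when $\bar L$ is nonnilpotent. In the cyclic regime $\bar L$ can be nilpotent---already in Example~\ref{cyclic1} it is one-dimensional---and then the Lie classification says nothing about $\bar A^3$. The paper does not descend to the Lie quotient at all: from $[N,L_1]=0$ and the description of $N$ it obtains $[N,A]\le Leib(L)$, and then the Leibniz fact $ab+ba\in Leib(L)$ gives $[A,N]\le Leib(L)$; since $A^2\le N$ this yields $A^3\le[A,N]\le Leib(L)$. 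Similarly, for the final dichotomy the paper argues via $Leib(L/N)$: if it is nonzero it must contain the unique minimal ideal $A/N$, forcing $L/N$ and hence $L$ to be cyclic. This is both shorter and more complete than your $U$-component sketch, which gestures at invertibility of $\ell_x|_U$ but does not actually exhibit a single generator of $L$.
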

\begin{proof}
%
$L$ contains a self-normalizing maximal subalgebra $M$, which is a Cartan subalgebra of $L$. Let $N$ be the core of $M$. By Proposition \ref{stitz_prop}, $L/N$ contains a unique minimal ideal $A/N$ which complements $M/N$ in $L/N$. So $L/A \cong M/N$ and since $M$ is nilpotent, $L/A$ is nilpotent. Since $A/N$ is nilpotent and minimal, $(A/N)^2=0$ so $A/N$ is abelian. Since $L/N$ is not nilpotent, by Engel's Theorem \cite{barnesleib,jacobsonleib}, there exists $x \in L/N$ with $x \notin A/N$ such that left-multiplication by $x$, denoted $\ell_{x}$, is not nilpotent on $L/N$. Without loss of generality, we can assume $x\in M$, $x \notin N$. Since $M/N$ is nilpotent and complements $A/N$ in $L/N$, this implies that $\ell_{x}$ restricted to $A/N$ is not nilpotent. Thus the subalgebra $B/N$ of $L/N$ generated by $A/N$ and $x$ is not nilpotent, so by the hypothesis of the theorem 
$B/N = L/N$.

Since $A/N$ is an ideal, $L = \langle x \rangle + A$. We claim that $x^2 \in N \subseteq A$. Since $M$ is nilpotent, $x^{n+1}=0$ for some $n$. Let $N_1 = \spn\{x^2,\ldots,x^n\} + N$, so that $N \leq N_1 \lneq M$.
Since $N \unlhd L$ and left-multiplication by $x^i$ is zero for $i>1$, $[N_1,L]\leq N \leq N_1$. Since $A/N$ is a minimal ideal of $L/N$, by Lemma 1.9 of \cite{barnesleib}, $[A/N,L/N]$ is 0 or anticommutative. But since $[x^i,A]=0$ for all $i>1$, $[A,x^i]$ is contained in $N$. 
From this, using the decompositions $L=\langle x \rangle + A$ and $N_1 = \spn\{x^2,\ldots,x^n\} + N$ it follows that $[L,N_1] \leq N_1$. Thus $N_1$ is an ideal of $L$ and by the maximality of $N$, $N_1=N$. Therefore $x^2 \in N$ and $L = \spn\{x\} \oplus A$.
Thus $\dim L = 1 + \dim A$, and 
$1 = \dim L/A = \dim M/N$. Define $F$ to be the one-dimensional subspace $F=\spn\{x\}$. Then we have $L=\langle x \rangle + A$ and $L = F \oplus A$, but unless $x^2=0$ the first sum is not direct and $F$ is not a subalgebra.

Let $L=M \oplus L_1$ be the Fitting decomposition of $L$ with respect to left-multiplication by $M$. Then $M/N$ is a Cartan subalgebra of $L/N$
%
and $(L_1+N)/N$ is the Fitting one-component of $L/N$ with respect to left-multiplication by $M/N$.
Since $L/N = A/N + M/N$, $L/N$ is not nilpotent and $A/N$ is a minimal ideal, we have that $M/N$ acts nontrivially and irreducibly on $A/N$.
Since $[M/N,A/N]=A/N$, the Fitting one-component of $L/N$ with respect to left-multiplication by $M/N$ is $A/N$.
Therefore $A/N=(L_1+N)/N$ and $A=L_1\oplus N$. In addition, $[N,L_1]\subseteq [M,L_1] = L_1$, 
so $[N,L_1]\subseteq N\cap L_1 = 0$.  

Let $T$ be the subalgebra of $L$ generated by $L_1$. Since left-multiplication by $M$ acts irreducibly on $L_1$ and $N$ is nilpotent, $[F,L_1]=L_1$. This implies $[F,T]=T$ and further that $[\langle x \rangle, T]=T$. Thus $\langle x \rangle + T$ is a nonnilpotent subalgebra of $L$, hence $\langle x \rangle + T = L$. Notice $x^2 \in N \leq A$ and $L_1 \leq A$ imply that $\langle x \rangle^2+T \leq A$. However $\langle x \rangle^2+T$ is a codimension 1 subalgebra of $L$, so $A = \langle x \rangle^2+T = \langle x\rangle^2 + \langle L_1 \rangle$.

Recalling that $A/N$ is abelian, we know that $A^2\leq N$, so it follows that $(L_1)^2+\lrangle{x}^2\leq N$. However, $(L_1)^2+\lrangle{x}^2$ and $N$ have the same dimension, so $(L_1)^2+\lrangle{x}^2=N$. 
Hence, $N^2 = \lrbrack{N,N}\leq Leib(L)$. Because $A=L_1\oplus N$, we know $\lrbrack{N,A}\leq Leib(L)$. By definition of $Leib(L)$, this implies $\lrbrack{A,N}\leq Leib(L)$. Thus, $A^3 = [A,A^2]\leq[A,N]\leq Leib(L)$.

Since $\ell_x|_{L_1}$ is not nilpotent, there exists an $a\in L_1$ such that $\ell_x$ is not nilpotent on $a$. 
Then $\spn\{a, xa, x(xa),\ldots, (\ell_x)^k(a)\} \subseteq L_1$, where we choose the largest $k$ such that this set is linearly independent. 
Since $M/N$ acts irreducibly on $A/N$, it follows that $F \simeq M/N$ acts irreducibly on $L_1 \simeq A/N$, 
so $\spn\{a, xa, x(xa),\ldots, (\ell_x)^k(a)\} = L_1$.
Because $L_1$ is the Fitting one-component, $(\ell_x)^{k+1}(a)=c_0 a+c_1 xa + \cdots + c_k(\ell_x)^k(a)$, and 
$c_0\ne 0$.
Note that the matrix for $\ell_x$ acting on $L_1$ is in rational canonical form, and therefore the characteristic polynomial is the minimal polynomial $p(\la)$, as given in the theorem.
If $Leib(L/N)=0$, then $Leib(L)\leq N$. Now suppose that $Leib(L/N)\ne 0$. Then there exists a minimal ideal inside of $Leib(L/N)$, and so $A/N\leq Leib(L/N)$. Since $A/N$ is a codimension 1 subalgebra of $L/N$, then $A/N=Leib(L/N)$. Thus, $Leib(L/N)$ has codimension 1 in $L/N$, which implies that $L/N$ is cyclic: $L/N = \langle \bar{z} \rangle$. Since $\langle \bar{z}\rangle$ is nonnilpotent, then $\langle z \rangle$ is nonnilpotent, and $L=\langle z \rangle$ is cyclic. 
\end{proof}

Note that the products listed in this theorem are not necessarily the only nonzero products in $L$.
However we know that $x^2 \in \langle x \rangle^2 \leq N$, $nx \in N$ for any $n \in N$, and $a_i x = - x a_i + Leib(L)$, and in the noncyclic case $Leib(L) \leq N$. Also, $A/N$ abelian means that $a_i a_j \in N$. Thus the description in the proof shows all nontrivial products in $L/N$.

Using the notation from the proof, the theorem can be restated in the following way.

\begin{corollary}\label{min_nonnilp2}
Let $L$ be a minimal nonnilpotent Leibniz algebra. Let $M$ be a self-normalizing maximal subalgebra of $L$ with core $N$, and $L=M\oplus L_1$ be the Fitting decomposition of $L$ with respect to $M$. Then $L$ is the vector space direct sum of $N$, $L_1$, and $F$ where $F$ is a one-dimensional subspace of $L$ and $M=N\oplus F$. Furthermore, $A=N\oplus L_1$ is an ideal of $L$ with $A^3\leq Leib(L)$.
\end{corollary}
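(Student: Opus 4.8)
The plan is to assemble Corollary~\ref{min_nonnilp2} directly from the facts established during the proof of Theorem~\ref{min_nonnilp}, since the corollary is essentially a bookkeeping restatement phrased in the language of the Fitting decomposition. First I would recall from that proof the Fitting decomposition $L=M\oplus L_1$ with respect to left-multiplication by $M$, together with the identity $A=L_1\oplus N$ established there, where $N\leq M$ is the core of $M$ and $A$ is the preimage in $L$ of the unique minimal ideal $A/N$ of $L/N$.

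Next I would pin down the one-dimensional piece $F$. In the theorem's proof it was shown that $\dim M/N=1$ and an element $x\in M$ with $x\notin N$ was selected so that $\ell_x$ is not nilpotent; setting $F=\spn\{x\}$, the facts $x\notin N$ and $\dim M/N=1$ give $M=N\oplus F$ as a vector space direct sum. Substituting this into the Fitting decomposition then yields $L=M\oplus L_1=(N\oplus F)\oplus L_1=N\oplus L_1\oplus F$, which is the asserted decomposition. The directness is inherited from $M\cap L_1=0$ (the defining property of the Fitting one-component) together with $N\cap F=0$, the latter because $x\notin N$.

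It then remains to record the ideal structure. The subspace $A=N\oplus L_1$ is an ideal of $L$: the core $N$ is an ideal of $L$ by definition, and $A/N$ is an ideal of $L/N$ since it is the minimal ideal, so its preimage $A$ is an ideal of $L$. The containment $A^3\leq Leib(L)$ is precisely one of the conclusions already proved in Theorem~\ref{min_nonnilp}, so I would simply cite it rather than reprove it.

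Because every ingredient is already in hand, I do not anticipate a genuine obstacle here; the only point demanding any care is confirming that the three summands $N$, $L_1$, and $F$ are mutually independent, which I would reduce to the two observations $M\cap L_1=0$ and $N\cap F=0$ noted above. In short, the corollary is established by collecting and reorganizing statements from the proof of the theorem, and its chief value is notational rather than logical.
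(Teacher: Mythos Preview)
Your proposal is correct and matches the paper's approach exactly: the paper presents this corollary as a direct restatement of facts already established in the proof of Theorem~\ref{min_nonnilp}, with no separate argument given. Your identification of the needed ingredients---the Fitting decomposition $L=M\oplus L_1$, the equality $A=L_1\oplus N$, the fact that $\dim M/N=1$ so $M=N\oplus F$, and the conclusion $A^3\leq Leib(L)$---is precisely what the paper's proof supplies.
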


In Lie algebras \cite{stitz_min,proper_nilp} prove that $A^3=0$. We recover this result for the case where $L$ is a Lie algebra and generalize to $A^3 \leq Leib(L)$ in the non-Lie case. This is due to the fact that, $A^2=N$ in Lie algebras but in Leibniz algebras we have that $A^2 \leq N$, since $N=(L_1)^2+\langle x \rangle^2=A^2+\langle x \rangle^2$. If $x^2=0$, then we would have $A^2=N$ and $A^3=0$.

%


Note that many nonnilpotent, cyclic Leibniz algebras have all proper subalgebras nilpotent (see Example \ref{cyclic1}). However there also exist nonnilpotent, cyclic Leibniz algebras with nonnilpotent subalgebras. One example is $L=\spn\{z,z^2,z^3\}$ with $zz^3=z^2+2iz^3$ over $\C$, which has a nonnilpotent subalgebra $M=\spn\{ia-a^2,a^2+ia^3\}$. Our theorem shows the structure required for minimal nonnilpotent, cyclic Leibniz algebras. For a more exhaustive study of cyclic Leibniz algebras, see \cite{batten-gin,bugg-hedges}.


\section{examples}

For the following examples we adopt the convention that when we list products of a Leibniz algebra, those not mentioned are assumed to be zero.
Note that whenever $L$ is cyclic, its generator will never be an element of either $M$ or $A$. In Example \ref{cyclic1}, neither $x$ nor $a$ is a generator, but $z=x+a$ is a generator.

\begin{example}\label{cyclic1}
Let $L$ be the cyclic Leibniz algebra $L=\spn\{z,z^2\}$ with $zz^2=z^2$. This is a minimal nonnilpotent Leibniz algebra. Then $x=z-z^2$, $a=z^2$, $N=0$, $M=\spn\{x\}$, and $A=\spn\{a\}$.
\end{example}


In Lie algebras $F=\spn\{x\}$ is a subalgebra, however in Leibniz algebras this only guaranteed to be a subspace of $L$. See Example \ref{nontrivial}. In Lie algebras, either $A$ is a minimal ideal or $A^2=Z(A)$. Either case would imply $A^3=0$, but this is clearly not the case in Example \ref{nontrivial} when $k\geq 3$.

\begin{example}\label{nontrivial}
Let $L=\spn\{x,x^2,\ldots,x^j,a,a^2,\ldots,a^k\}$ for some $j,k\in\N$ with $x^{j+1}=0$, $a^{k+1}=0$, $xa=a=-ax$, and $xa^i=ia^i$.
This is a minimal nonnilpotent Leibniz algebra. Then $N=Leib(L)=\spn\{x^2,\ldots,x^j,a^2,\ldots,a^k\}$, $F=\spn\{x\}$, $M=F\oplus N$, and $A=\spn\{a\}\oplus N$. In this example $c_0=1$ and $p(\la)=\la-1$, which is irreducible over any field. Here $A^3=\spn\{a^3,\ldots,a^k\} \neq 0$ for $k\geq 3$.
\end{example}

Over an algebraically closed field every irreducible polynomial has degree one, so the dimension of $A/N$ is one and $A=\spn\{a\}\oplus N$. Over the field of real numbers every irreducible polynomial is linear or quadratic, so either $A=\spn\{a\}\oplus N$ or $A=\spn\{a_0,a_1\}\oplus\nobreak N$. Over the rational numbers, we can construct a Leibniz algebra of this type with $A/N$ having any dimension:

\begin{example}
Over the field of rational numbers there is an irreducible polynomial of form $p(\la)=\la^{k+1}-c_k\la^k-\cdots-c_1\la-c_0$ for any $k$. Define $L=\spn\{x,a_0,a_1,\ldots,a_k\}$ with $xa_i=a_{i+1}$ for $0\leq i < k$ and $xa_k=c_0a_0 + c_1a_1 + \cdots + c_ka_k$. This is a minimal nonnilpotent Leibniz algebra. Then $N=Leib(L)=\spn\{a_1,\ldots,a_k\}$, $M=\spn\{x\}\oplus N$, $A=\spn\{a_0\}\oplus N$.
\end{example}

\end{document}